\numberwithin{equation}{section}
\theoremstyle{plain}
\newtheorem{theorem}{Theorem}[section]
\newtheorem{lemma}[theorem]{Lemma}
\theoremstyle{definition}
\newtheorem{algorithm}[theorem]{Algorithm}
\newtheorem{definition}[theorem]{Definition}
\newtheorem{example}[theorem]{Example}
\theoremstyle{remark}
\newtheorem{case[theorem]}{Case}
\def \Z {\mathbb{Z}}
\lstdefinestyle{mystyle}{
    basicstyle=\footnotesize,
    breakatwhitespace=false,         
    breaklines=true,                 
    captionpos=b,                    
    keepspaces=true,                 
    numbers=left,                    
    numbersep=5pt,                  
    showspaces=false,                
    showstringspaces=false,
    showtabs=false,                  
    tabsize=2
}
\author{Philipp Birklbauer}
\address{Department of Mathematics, University of Rochester, Rochester, NY}
\email{philipp.birklbauer@rochester.edu}
\title{The Fuglede conjecture holds in $\Z^3_5$ } 
\begin{document}

\begin{abstract}
The Fuglede conjecture states that a set is spectral if and only if it tiles by translation. The conjecture was disproved by T. Tao for dimensions 5 and higher \cite{Tao2004} by giving a counterexample in $\Z_3^5$. We present a computer program that determines that the Fuglede conjecture holds in $\Z_5^3$ by exhausting the search space.

In \cite{Iosevich2017} A. Iosevich, A. Mayeli and J. Pakianathan showed that the Fuglede conjecture holds over prime fields when the dimension does not exceed 2. The question for dimension 3 was previously addressed in \cite{Aten2015} for $p=3$.

In this paper we build upon the results of the work in \cite{Aten2015} to allow a computer to carry out the lengthy computations.
\end{abstract}  

\maketitle

\theoremstyle{plain}
\newtheorem*{function}{Function}

\section{Introduction} 

Fuglede's conjecture was first stated by B. Fuglede in 1974 (\cite{Fuglede}). It proposes that for every subset $E$ of $\mathbb{R}^d$ with positive Lebesque measure, $L^2(E)$ has an orthogonal basis of exponentials (called \emph{$E$ is spectral}) if and only if $E$ tiles $\mathbb{R}^d$ by translation. In subsequent years the conjecture was proven for various different domains.
However, in 2004 T. Tao disproved the conjecture for $d \geq 5$ in \cite{Tao2004}. Furthermore Tao's proof relied on constructing a counterexample, namely a spectral set in $\Z_3^5$ that does not tile.
More counterexamples for different variations of the discrete $\Z_n^d$ case were found subsequently. In 2016 A. Iosevich, A. Mayeli and J. Pakianathan showed that the conjecture does indeed hold in the $\Z_p^2$ case. For the $p$ prime and $d=3$ case it is known that tiling implies spectrum \cite[Theorem 1.1.(g)]{Aten2015}, but the implication in the other direction remains open.

Also, in \cite{Aten2015} the authors proved that the Fuglede conjecture holds in the $d=3$, $p=3$ case. In their paper the authors developed the tools to be able to tackle the problem directly. Unfortunately, they have to point out that their methods are no longer feasible for $p=5$ since the number of spectral sets that need to be tested become too big (even for a computer).

The aim of this paper is to overcome these problems by approaching it from a different perspective, while still relying heavily on the theory developed in \cite{Aten2015}: Instead of generating all possible matrices and testing their rank, we start with just three linearly independent vectors, satisfying certain conditions and see if we can find enough vectors within the span of those three vectors that also fulfill the conditions to then construct a square matrix using those vectors as the rows. This will guarantee that the rank of the resulting matrix is three, as needed for a counterexample. Using this approach we can reduce the search space drastically.

\begin{theorem}
The Fuglede conjecture holds in $\Z_5^3$. That is, a set $V \subset \Z_5^3$ tiles if and only if it is spectral.
\end{theorem}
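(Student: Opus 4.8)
The plan is to turn the statement into a finite computer search and carry it out. By \cite[Theorem 1.1.(g)]{Aten2015} a tiling subset of $\Z_5^3$ is already known to be spectral, so it suffices to show that every spectral $E\subset\Z_5^3$ tiles. Following \cite{Aten2015}, I would encode spectrality of $E$ (with spectrum $\Lambda$, both of size $m$) as the $m\times m$ matrix $L=(\lambda_i\cdot e_j)$ over $\Z_5$: the characters $x\mapsto e^{2\pi i\lambda\cdot x/5}$ form an orthogonal basis of $L^2(E)$ exactly when any two distinct rows of $L$ differ by a \emph{balanced} vector, i.e. one taking each value in $\Z_5$ precisely $m/5$ times. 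A vanishing sum of $5$th roots of unity forces $5\mid m$, and since $L$ is a product of an $m\times3$ and a $3\times m$ matrix its rank is at most $3$; conversely any such ``log-Hadamard'' matrix over $\Z_5$ of rank $\le 3$ yields a spectral set in $\Z_5^3$ of the same size. Sets of size $1,5,25,125$ are disposed of directly (the first and last trivially tile and are spectral, the others via the structural results of \cite{Aten2015}), and rank at most $2$ reduces to $\Z_5^2$, where the conjecture holds by \cite{Iosevich2017}. Hence a counterexample would have to be an $m\times m$ log-Hadamard matrix over $\Z_5$ of rank exactly $3$ with $m$ a multiple of $5$, $5<m<125$ and $m\neq25$, and the goal becomes ruling all of these out.

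Enumerating all candidate matrices and testing ranks is precisely what \cite{Aten2015} report as infeasible for $p=5$, so instead I would build each matrix from its row space. The $m$ rows of a rank-$3$ log-Hadamard matrix lie in a $3$-dimensional subspace $V\subseteq\Z_5^m$, which has only $125$ elements; up to permuting columns $V$ is recorded by its columns, i.e. by an $m$-element set $E\subseteq\Z_5^3$, and $E$ may be further normalized by the action of $\mathrm{AGL}_3(\Z_5)$. For each $V$ one forms the graph on its $125$ points joining $u,w$ whenever $u-w$ is balanced in $\Z_5^m$, and searches for a clique of size $m$; reading such a clique as the rows produces an $m\times m$ log-Hadamard matrix, of rank $3$ unless the clique spans a smaller subspace (already covered by the $\Z_5^2$ case), and every counterexample arises this way. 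The point of starting from three fixed linearly independent ``seed'' rows and only searching inside their $125$-element span -- rather than inside all of $\Z_5^m$ -- is exactly that it shrinks the search to something a machine can finish; additional pruning comes from the $\mathrm{AGL}_3(\Z_5)$-equivalence of the sets $E$ and from the balance constraint itself.

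The hard part is making this search provably exhaustive while keeping it feasible. One must verify that nothing is lost in the reduction -- in particular that only the listed values of $m$ need to be tested and that the prime-power sizes really are settled by the cited results -- and then organize the enumeration of subspaces $V$ (equivalently of normalized sets $E$), together with the clique searches, so that the total work is bounded, since the raw number of candidate $V$ still grows quickly with $m$. This is where careful bookkeeping with $\mathrm{AGL}_3(\Z_5)$-orbit representatives and with the combinatorics of balanced vectors is essential, and it is what makes the program's final output -- that no clique of size $m$ exists for any admissible $m$ -- a genuine proof rather than an artifact of an incomplete search. The number-theoretic input (vanishing sums of $5$th roots of unity, the rank bound) is by comparison routine.
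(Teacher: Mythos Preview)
Your high-level strategy---fix three linearly independent rows, work inside their $125$-element span, and look for a large enough clique in the ``balanced-difference'' graph---is exactly what the paper implements. The genuine gap is the range of sizes you propose to test. You keep every multiple of $5$ with $5<m<125$, $m\neq25$; but Proposition~9.1 of \cite{Aten2015} (quoted here as Theorem~\ref{theorem:3DFuglede}) already shows that Fuglede in $\Z_p^3$ is \emph{equivalent} to the nonexistence of a rank-$3$ special dephased log-Hadamard $mp\times mp$ matrix with $1<m<p$, so for $p=5$ only the three sizes $10,15,20$ survive. This is not cosmetic: enumerating $m$-element column (multi)sets $E\subset\Z_5^3$ up to $\mathrm{AGL}_3(\Z_5)$ is hopeless for $m$ near $100$ or $120$, so without that reduction your plan does not terminate, and no amount of ``careful bookkeeping'' closes the gap.

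Even once one restricts to sizes $10,15,20$, the paper organizes the search differently from your $\mathrm{AGL}_3$-orbit listing. It fixes $b_1=(0,1,\dots,4,\dots,0,1,\dots,4)$ and uses that each ordered pair of balanced rows with balanced difference raises a \emph{Davey matrix} (a $5\times5$ nonnegative-integer matrix whose row, column, and broken-diagonal sums are all equal). One Davey matrix then determines $b_2$ up to harmless column permutations, and a further pair of Davey matrices for $(b_1,b_3)$ and $(b_2,b_3)$---subject to an explicit compatibility rule coming from composing the two---sharply restricts the candidates for $b_3$. This Davey-matrix filtering, rather than a generic symmetry reduction, is what makes the computation finish; in fact the paper never needs a full clique search, because for every admissible $(b_1,b_2,b_3)$ the set of balanced linear combinations adjacent to all three basis vectors is already too small.
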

The remainder of the paper will give the proof of this result with the aid of a computer.

\section{Theoretical results}

\begin{definition}
A vector $v \in \Z_p^{mp}$ is called \emph{balanced} if every element of $\Z_p$ appears exactly $m$ times as a component of $v$.

A matrix $M \in \Z_p^{mp \times mp}$ is called \emph{log-Hadamard} if the difference of any two rows of $M$ is balanced.

A matrix $M \in \Z_p^{mp \times mp}$ is called \emph{special dephased log-Hadamard} if $M$ is a log-Hadamard matrix and the first row and column are all 0 and the second row and column are $(0,1,\dots,p-1,0,\dots,p-1,\dots,0,1,\dots,p-1)$. 
\end{definition}

\begin{theorem}[Proposition 9.1 in \cite{Aten2015}]
\label{theorem:3DFuglede}
Let $p$ be a prime. The following are equivalent: 
\begin{itemize}
\item A subset of $\mathbb{Z}_p^3$ tiles if and only if it is spectral.
\item There does not exist an $mp \times mp$ special dephased log-Hadamard matrix with entries in $\mathbb{Z}_p$ of rank $3$ where $1 < m < p$.
\end{itemize}

\end{theorem}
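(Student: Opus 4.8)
The plan is to prove the equivalence in Theorem~\ref{theorem:3DFuglede} by making the Fourier-analytic dictionary between spectral sets and complex Hadamard matrices completely explicit over $\mathbb{Z}_p$, and then normalizing and trimming the resulting list of matrices down to the special dephased rank-$3$ ones with $1<m<p$. The first reduction is free: by \cite[Theorem 1.1.(g)]{Aten2015} every tiling subset of $\mathbb{Z}_p^3$ is already spectral, so the conjecture holds in $\mathbb{Z}_p^3$ if and only if there is \emph{no} spectral subset of $\mathbb{Z}_p^3$ that fails to tile. Thus it suffices to match up, in both directions, spectral non-tiling sets with special dephased log-Hadamard matrices of rank $3$ and $1<m<p$.

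First I would establish the passage from spectral sets to $\mathbb{Z}_p$-matrices. A set $E\subseteq\mathbb{Z}_p^3$ is spectral exactly when there is a set $\Lambda\subseteq\mathbb{Z}_p^3$ with $|\Lambda|=|E|$ such that the matrix $H=\bigl(\omega^{\langle\lambda,x\rangle}\bigr)_{\lambda\in\Lambda,\,x\in E}$ with $\omega=e^{2\pi i/p}$ satisfies $HH^{*}=|E|\,I$. Writing $H_{\lambda,x}=\omega^{M_{\lambda,x}}$ with $M_{\lambda,x}=\langle\lambda,x\rangle\in\mathbb{Z}_p$, orthogonality of two rows becomes $\sum_{x\in E}\omega^{\,M_{\lambda,x}-M_{\lambda',x}}=0$; since $p$ is prime, a sum of $p$-th roots of unity with nonnegative integer multiplicities vanishes only when all multiplicities agree, so every residue occurs equally often in the difference of those two rows. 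Hence $|E|=mp$ for some $m$ and $M$ is log-Hadamard, and because $M$ factors through the pairing $\mathbb{Z}_p^3\times\mathbb{Z}_p^3\to\mathbb{Z}_p$ it has $\mathbb{Z}_p$-rank at most $3$. Conversely, an $mp\times mp$ log-Hadamard matrix of rank at most $3$ factors as an $mp\times3$ matrix times a $3\times mp$ matrix, whose rows spell out candidate multisets $\Lambda$ and $E$ in $\mathbb{Z}_p^3$; the identity $HH^{*}=mp\,I$ forces all rows and all columns of $H$ distinct, so $\Lambda$ and $E$ are honest sets and $E$ is spectral.

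Next I would bring the matrix into normal form. Replacing $E$ by $E-x_0$ and $\Lambda$ by $\Lambda-\lambda_0$ multiplies the rows and columns of $H$ by unimodular constants, i.e.\ adds constants to the rows and columns of $M$, while permuting rows or columns just relabels $\Lambda$ or $E$, and applying an element of $\mathrm{GL}_3(\mathbb{Z}_p)$ to $E$ together with the contragredient to $\Lambda$ leaves the pairing log-Hadamard; using these I would kill the first row and column of $M$ and then force the second row and column into the prescribed block pattern $(0,1,\dots,p-1,\dots,0,1,\dots,p-1)$, which the balancedness of row differences makes possible after a suitable relabeling of $E$. All of these operations preserve both ``spectral'' and ``tiles by translation'', so a spectral non-tiling set survives the normalization and, conversely, a special dephased log-Hadamard matrix of rank exactly $3$ returns a spectral set that genuinely spans $\mathbb{Z}_p^3$.

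The main obstacle I expect is the boundary analysis that pins the parameters to rank exactly $3$ and $1<m<p$. If the rank is at most $2$, the normalized $E$ lies in a coset of a $2$-dimensional subspace, where the Fuglede conjecture is known \cite{Iosevich2017}, so such an $E$ tiles; if $m=1$ then $|E|=p$ and the row-difference condition is rigid enough to show $E$ is, up to the normalization, contained in a line and therefore tiles; and if $m\ge p$ then $|E|\ge p^2$ and one must show $E$ is assembled from full cosets of a line, allowing a descent to the already-settled two- and one-dimensional cases. In each of these ranges spectral implies tiling, so a spectral non-tiling set in $\mathbb{Z}_p^3$ exists precisely when a rank-$3$ special dephased log-Hadamard matrix with $1<m<p$ exists, which is the asserted equivalence. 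These reductions — especially confining a rank-$\le 2$ matrix to a plane and disposing of $m\ge p$ — are the delicate part, and I would follow the treatment in \cite{Aten2015}, supplying the vanishing-sum and normalization lemmas as sketched above.
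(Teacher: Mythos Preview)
The paper does not prove this statement: it is quoted verbatim as Proposition~9.1 of \cite{Aten2015} and used as a black box, so there is no in-paper argument to compare your proposal against.

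Your outline is the standard route and matches what \cite{Aten2015} does: pass from a spectral pair $(E,\Lambda)$ to the matrix $M_{\lambda,x}=\langle\lambda,x\rangle$, use the vanishing of $p$-th-root sums to get the balanced-difference (log-Hadamard) condition and $|E|=mp$, observe $\mathrm{rank}_{\mathbb{Z}_p}M\le 3$, and dephase. That part is fine, and the easy direction of the equivalence you leave implicit is indeed easy: if $1<m<p$ then $mp\nmid p^3$, so the spectral set produced by such a matrix cannot tile.

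Where your sketch is thin is exactly where the content of Proposition~9.1 lives. In the rank~$\le 2$ case you assert that $E$ lies in a plane, but from $\mathrm{rank}(AB^{T})\le 2$ you only get that \emph{either} $E$ or $\Lambda$ lies in a $2$-dimensional subspace; the case $\mathrm{rank}(\Lambda)\le 2$ needs its own argument before you can invoke \cite{Iosevich2017}. For the normalization, arranging the second row \emph{and} the second column simultaneously into the block pattern $(0,1,\dots,p-1,\dots)$ is more than a column permutation allowed by balancedness of row differences; it uses that the matrix is log-Hadamard in both directions. And the $m\ge p$ regime (forcing $E$ to contain full cosets of a line and then descending) is a genuine structural lemma, not a formality. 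You acknowledge all of this and defer to \cite{Aten2015}, which is appropriate, but be aware that those deferred steps are essentially the whole proof.
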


We immediately get the following lemma:
\begin{lemma}
\label{3indpvec}
Let $1 < m < p$. 
If there exists an $mp \times mp$ special dephased log-Hadamard matrix with entries in $\mathbb{Z}_p$ of rank $3$ then there also exists an $M \in \Z_p^{mp \times mp}$ with the first four rows being $b_0$,$b_1$,$b_2$ and $b_3$ such that:
\begin{enumerate}
\item $M$ is log-Hadamard
\item $b_0$ and the first column are all 0, \label{lemma::3indpvec::b_0=0}
\item $b_1 = (0,1,\dots,p-1,0,\dots,p-1,\dots,0,1,\dots,p-1)$,
\item $b_2 = (0,0,*,*,\ldots,*)$,
\item $b_3$ is linearly independent of $b_1$,$b_2$,
\item all other rows are linear combinations of $b_1, b_2, b_3$, \label{lemma::3indpvec::lincomb}
\item the difference between any two row vectors is balanced. \label{lemma::3indpvec::balanced}
\end{enumerate}
\end{lemma}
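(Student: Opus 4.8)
The plan is to obtain the desired matrix (call it $M'$, to distinguish it from the hypothesized $M$) from $M$ by merely reordering its rows, so the whole argument reduces to a pigeonhole-style choice of which rows to designate $b_0,b_1,b_2,b_3$. By the definition of ``special dephased log-Hadamard'', $M$ itself is log-Hadamard, its first row and first column are identically $0$, and both its second row and its second column equal the pattern $w := (0,1,\dots,p-1,0,1,\dots,p-1,\dots,0,1,\dots,p-1)$. If we keep the first row as $b_0=0$ and the second row as $b_1=w$, then conditions (1), (2), (3) and (7) hold for any matrix obtained from $M$ by a row permutation, because permuting rows preserves both the all-zero first column and the property that the difference of every pair of rows is balanced. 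So everything reduces to picking $b_2$ and $b_3$ from the remaining rows so that (4), (5) and (6) hold.

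First I would record the elementary observation that no two rows of $M$ can both equal the zero vector: their difference would be $0\in\Z_p^{mp}$, which is not balanced since it contains the symbol $0$ with multiplicity $mp\neq m$. Hence $b_0$ is the only zero row of $M$. Next, because the second column of $M$ equals $w$, a row of $M$ has second coordinate $0$ precisely when its index is one of the $m$ positions $1,p+1,2p+1,\dots,(m-1)p+1$ at which $w$ vanishes. One of these is the row $b_0$, and since $m>1$ there is at least one other such row; choose it to be $b_2$. By the previous observation $b_2\neq 0$, and its first two coordinates are $0$, so it has the shape demanded in (4). Moreover $b_2$ is linearly independent of $b_1$: a scalar multiple $c\,b_1$ has second coordinate $c$, so the only multiple of $b_1$ with vanishing second coordinate is the zero vector, which $b_2$ is not.

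Now $b_1$ and $b_2$ span a $2$-dimensional subspace of the row space of $M$, and that row space has dimension $3$; hence some row $b_3$ of $M$ lies outside $\mathrm{span}(b_1,b_2)$. Then $b_1,b_2,b_3$ are linearly independent, giving (5), and they span the whole $3$-dimensional row space of $M$, so every row of $M$ --- in particular $b_0$ and every row other than $b_0,b_1,b_2,b_3$ --- is a linear combination of $b_1,b_2,b_3$, which is (6). Taking $M'$ to be $M$ with $b_0,b_1,b_2,b_3$ moved into the first four rows in that order and the other rows appended in any order completes the construction.

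I do not anticipate a genuine obstacle here; the one point requiring a little care is the existence of a usable $b_2$ --- nonzero, with vanishing second coordinate, and independent of $b_1$ --- and this is exactly where the hypotheses $m>1$ and the non-balancedness of the zero vector are used.
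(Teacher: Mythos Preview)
Your proof is correct and is precisely the natural argument the paper has in mind when it says the lemma follows ``immediately'' from the definition of a special dephased log-Hadamard matrix; the paper gives no further details beyond that word. Your care in using $m>1$ together with the fixed second column to locate a nonzero row $b_2$ with vanishing second coordinate, and then invoking rank $3$ to find $b_3$, is exactly the intended reasoning.
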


\begin{definition}[Definition 5.3. from \cite{Aten2015}]
For $p$ a prime, a $p \times p$ matrix $X$ of nonnegative integer entries whose row and column sums as well as diagonal sums  of the form $\sum_{i=0}^{p-1} X_{i,i+s}$ (for $0 \leq s \leq p-1$) all add up to the same number $m$ is called a \emph{Davey matrix (of weight $m$)}.
\end{definition}

The remarks after \cite[Corollary 5.2.]{Aten2015} give raise to the following lemma:
\begin{lemma}
If $v,w \in \Z_p^{m p}$ are balanced and their difference is balanced as well, then the matrix $X \in \Z^{p \times p}$ with $X_{i,j}= \# \{k : v_k=i, w_k=j, k = 1,\dots,mp\}$ is a Davey matrix. We say $X$ is raised by the pair $(v,w)$.
\end{lemma}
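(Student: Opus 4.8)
The plan is to verify the three defining properties of a Davey matrix directly from the counting definition of $X$. First observe that each entry $X_{i,j}$ is, by construction, the size of a set of indices, hence a nonnegative integer; so only the equalities among the row, column and diagonal sums need to be checked, and only the common value $m$ needs to be identified.

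For the row sums, fix $i \in \Z_p$. The sets $\{k : v_k = i,\ w_k = j\}$ for $j = 0,\dots,p-1$ partition $\{k : v_k = i\}$, so $\sum_{j=0}^{p-1} X_{i,j} = \#\{k : v_k = i\}$, which equals $m$ because $v$ is balanced. Symmetrically, fixing $j$ and partitioning $\{k : w_k = j\}$ according to the value of $v_k$ gives $\sum_{i=0}^{p-1} X_{i,j} = \#\{k : w_k = j\} = m$ since $w$ is balanced. For the diagonal sums, fix $s$ with $0 \le s \le p-1$ and read indices mod $p$. The sets $\{k : v_k = i,\ w_k = i+s\}$ for $i = 0,\dots,p-1$ partition $\{k : w_k - v_k = s \text{ in } \Z_p\}$, so $\sum_{i=0}^{p-1} X_{i,i+s} = \#\{k : w_k - v_k = s\}$. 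Since $x \mapsto -x$ is a bijection of $\Z_p$, the hypothesis that the difference of $v$ and $w$ is balanced (in whichever sign convention) is equivalent to $w - v$ being balanced, so each residue $s$ occurs exactly $m$ times among the components of $w - v$, and the diagonal sum is $m$. Hence all row, column and diagonal sums equal $m$, and $X$ is a Davey matrix of weight $m$.

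There is no genuinely hard step here; the computation is pure bookkeeping on partitions of the index set $\{1,\dots,mp\}$. The only point that deserves a moment's attention is lining up the cyclic shift $i \mapsto i+s \pmod p$ appearing in the definition of the diagonal sums with the statement that the componentwise difference of $v$ and $w$ is balanced, together with the harmless sign ambiguity in the phrase ``their difference is balanced''. It is also worth noting what each hypothesis buys: balancedness of $v$ (resp.\ $w$) is what makes every row (resp.\ column) sum equal to the specific value $m$, while balancedness of the difference is exactly what forces the diagonal sums to take that same value, so all three hypotheses are used and the conclusion is sharp in that sense. As a consistency check, summing any of the three families over all $p$ indices recovers $\sum_{i,j} X_{i,j} = mp$, in agreement with each sum being $m$.
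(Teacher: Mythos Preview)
Your proof is correct and follows essentially the same approach as the paper's: verify row, column, and diagonal sums directly from the counting definition, using balancedness of $v$, $w$, and $v-w$ respectively. Your explicit remark about the sign ambiguity in the diagonal computation is a nice touch, as the paper's own line for that case silently relies on the same observation.
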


\begin{proof}
$\sum_{k=0}^{p-1} X_{i,k}=\# \{k : v_k=i, k = 1,\dots,mp\}=m$ since $v$ is balanced.
$\sum_{k=0}^{p-1} X_{k,j}=\# \{k : w_k=j, k = 1,\dots,mp\}=m$ since $w$ is balanced.
$\sum_{k=0}^{p-1} X_{k,k+s}=\# \{k : v_k-w_k=s, k = 1,\dots,mp\}=m$ since $v-w$ is balanced.
\end{proof}

\begin{lemma}
\label{Lemma::win_cond}
If the conditions of Lemma \ref{3indpvec} are fulfilled and $b_1$,$b_2$ and $b_3$ are as in the Lemma. Then we know that there are three Davey matrices $D_1$, $D_2$ and $D_3$ (raised by the pairs $(b_1,b_2), (b_1,b_3)$ and $(b_2,b_3)$ respectively) and a set $\Lambda \in \Z_p^3$ of cardinality at least $m p-1$, such that for all $(\lambda_1,\lambda_2,\lambda_3), (\mu_1,\mu_2,\mu_3) \in \Lambda$ we have $(\lambda_1-\mu_1)b_1+(\lambda_2-\mu_2)b_2+(\lambda_3-\mu_3)b_3$ is balanced.
\end{lemma}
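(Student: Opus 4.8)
The plan is to take $\Lambda$ to be the set of coefficient vectors of the rows of $M$ with respect to $b_1,b_2,b_3$, and to read off every assertion from the log-Hadamard property of $M$ together with the structural conditions of Lemma \ref{3indpvec}. To begin, I would record that $b_1,b_2,b_3$ and their pairwise differences are all balanced: since $b_0$ is the zero vector by condition \ref{lemma::3indpvec::b_0=0}, each of $b_1=b_1-b_0$, $b_2=b_2-b_0$, $b_3=b_3-b_0$, $b_1-b_2$, $b_1-b_3$, $b_2-b_3$ is a difference of two rows of $M$ and hence balanced by condition \ref{lemma::3indpvec::balanced}. Applying the preceding lemma to the pairs $(b_1,b_2)$, $(b_1,b_3)$, $(b_2,b_3)$ then yields Davey matrices $D_1,D_2,D_3$ raised by those pairs.

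Next I would check that $b_1,b_2,b_3$ are linearly independent, so that coefficients with respect to them are unique. Indeed $b_3$ is independent of $b_1,b_2$ by hypothesis, and $b_1,b_2$ are independent because the second coordinate of $b_1$ is $1$ while the second coordinate of $b_2$ is $0$ and $b_2\neq 0$ (it is balanced and $p>1$). Every row $r$ of $M$ lies in the span of $b_1,b_2,b_3$ — the rows $b_0,b_1,b_2,b_3$ trivially and the remaining rows by condition \ref{lemma::3indpvec::lincomb} — so each row $r$ has a unique expression $r=\lambda_1(r)b_1+\lambda_2(r)b_2+\lambda_3(r)b_3$. Define $\Lambda$ to be the set of the resulting triples $(\lambda_1(r),\lambda_2(r),\lambda_3(r))\in\Z_p^3$.

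Finally I would settle the two quantitative claims. For the cardinality: the rows of $M$ are pairwise distinct, since two equal rows would have difference equal to the zero vector, which is not balanced (the residue $0$ occurs $mp>m$ times), contradicting condition \ref{lemma::3indpvec::balanced}; as the map $r\mapsto(\lambda_1(r),\lambda_2(r),\lambda_3(r))$ is then injective on the $mp$ rows, $|\Lambda|=mp\geq mp-1$ (one may also discard the triple $(0,0,0)$ coming from $b_0$ and still have $|\Lambda|\geq mp-1$). For the balancedness: if $(\lambda_1,\lambda_2,\lambda_3)$ and $(\mu_1,\mu_2,\mu_3)$ in $\Lambda$ come from distinct rows $r$ and $r'$ of $M$, then $(\lambda_1-\mu_1)b_1+(\lambda_2-\mu_2)b_2+(\lambda_3-\mu_3)b_3=r-r'$, which is balanced by condition \ref{lemma::3indpvec::balanced}.

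I do not expect a genuine obstacle: the crux is simply the observation that the linear combination appearing in the statement is literally a difference of two rows of $M$, so the log-Hadamard hypothesis does all the work. The only points that need a moment's care are the linear independence of $b_1,b_2,b_3$, which makes $\Lambda$ well defined, and the pairwise distinctness of the rows of $M$, which yields the cardinality bound.
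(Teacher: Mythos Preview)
Your argument is correct and is precisely the derivation the paper has in mind: the paper states this lemma without proof, treating it as an immediate consequence of Lemma~\ref{3indpvec} together with the Davey-matrix lemma, and your write-up simply makes those implicit steps explicit. The only remark is that the lemma's ``for all'' should really be read as ``for all distinct'' (as the paper itself does when it interprets $\Lambda$ as a clique), which you already handle; your parenthetical about optionally discarding $(0,0,0)$ also explains why the bound is stated as $mp-1$ rather than $mp$.
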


\section{Overview of the algorithm}
\subsection{Outline}
The last Lemma outlines our algorithm. We start with $b_1=(0,1,\dots,p-1,0,\dots,p-1,\dots,0,1,\dots,p-1)$ and cycle over all triples of Davey matrices and for each triple find all solutions $b_2,b_3$ of balanced vectors who pairwise generate the three given Davey matrices. For each solution of $b_1,b_2,b_3$ we find the largest set $\Lambda$ fulfilling the conditions above. And then show that $\Lambda$ is not big enough.

\subsection{Outer loop}
For a given prime $p$ and $1 < n <p$, we start with $b_1$ as in Lemma \ref{3indpvec} and a loop ranging over all Davey matrices of weight $n$. By Lemma \ref{3indpvec} we can first restrict our search to all balanced vectors whose first entry is 0, so we are only interested in Davey matrices $D$ with $D_{0,0}>0$. We will call the set of these matrices $\mathcal{D}_n$. Furthermore we can restrict $b_2$ to those with a second entry of 0 and therefore any Davey matrix $D$ that is raised by the pair ($b_1$,$b_2$) has to fulfill $D_{0,1}>0$. Algorithm \ref{alg::davey_on_vec} now returns one possible $b_2$ such that $D$ is raised by $(b_1,b_2)$. All possible solutions are the same, modulo permutations of those entries in $b_2$ where $b_1$ has the same value on corresponding entries. We can arbitrarily choose one of those solutions, as having $M$ as in Lemma \ref{3indpvec} we immediately see that column operations that do not change $b_1$ result in another matrix $M'$ that also fulfills the properties of the Lemma.

\begin{example}
Let $b_1=(0,1,2,0,1,2)$ a balanced vector in $\Z_3^6$ and $D$ be a Davey matrix: 
$$D= \begin{pmatrix}
    1 & 1 & 0 \\
    1 & 0 & 1 \\
    0 & 1 & 1
  \end{pmatrix}.$$
Then possible $b_2$ are 
$(0,0,1,1,2,2),(0,0,2,1,2,1)$
(since we insist that the first and second entry in $b_2$ is 0). The vectors only differ by transpositions of the third and sixth entry.
\end{example}

\subsection{Inner loops}
Now we have $b_1$ and $b_2$ fix. So the next step is to find all candidates for $b_3$. We know that both $(b_1,b_3)$ and $(b_2,b_3)$ have to give raise to a Davey matrix, so here we have to loop over the set of all Davey matrices in $\mathcal{D}_n$ twice. Now given two Davey matrices $D_1$ and $D_2$ we can find all possible $b_3$ using Algorithm \ref{alg::davey_on_vec_2x2}. Because of the work laid out in Subsection \ref{subsec::search_reduction} we can restrict the set that $D_1$ is chosen from (for fixed $D$ and $D_2$). This is because if $(b_1,b_2)$ give raise to $D$ and we want $(b_2,b_3)$ to give raise to $D_2$, we have some kind of composition of Davey matrices.

\subsection{Testing a triple $(b_1,b_2,b_3)$}
Now in the innermost part of the program we have to test if a triple $(b_1,b_2,b_3)$ is part of a potential solution. By Lemma \ref{3indpvec}  (\ref{lemma::3indpvec::lincomb}) we know that all other rows in a solution $M$ have to be linear combinations of $(b_1,b_2,b_3)$ and from (\ref{lemma::3indpvec::b_0=0}) and (\ref{lemma::3indpvec::balanced}) we know that we are only interested in the linear combinations that result in balanced vectors. This results in a set of valid combinations $V$ as a subset of $\Z_p^3$, i.e.
$$
V=\{(\lambda_1,\lambda_2,\lambda_3) \in \Z_p^3: \lambda_1 b_1+\lambda_2 b_2+\lambda_3 b_3 \text{ is a balanced vector in } \Z_p^{mp}\}
$$ Now to solve the problem we need a set $\Lambda \subset V$ that fulfills the conditions in Lemma \ref{Lemma::win_cond} which can be written as
$$
\forall (\lambda_1,\lambda_2,\lambda_3), (\mu_1,\mu_2,\mu_3) \in \Lambda:
(\lambda_1-\mu_1,\lambda_2-\mu_2,\lambda_3-\mu_3) \in V.
$$
So $\Lambda$ can be seen as a clique in a graph on $V$ with nodes $(v_1,v_2,v_3)$ and $(w_1,w_2,w_3)$ being connected by an edge if $(v_1-w_1,v_2-w_2,v_3-w_3) \in V$. Since the cardinality of $\Lambda$ has to be $np-1$ we need to find a clique of at least that cardinality that contains $(1,0,0),(0,1,0)$ and $(0,0,1)$.

\subsection{Results for $p=5$}
For $p=5$ it turns out that a clique-search is not necessary. Since we have that $(1,0,0),(0,1,0),(0,0,1) \in \Lambda$ we can exclude all nodes not connected to one of those three. The remaining set $R$ has to contain $\Lambda$.
Running the program shows that for each choice of $D$,$D_2$,$D_1$ and $b_3$ the reduced set $R$ has $|R|<np-1$ and therefore cannot contain a $\Lambda$ big enough for a solution. Hence there is no solution to Lemma \ref{3indpvec} and by Theorem \ref{theorem:3DFuglede} we conclude that a subset of $\mathbb{Z}_5^3$ tiles if and only if it is spectral, i.e. the Fuglede conjecture holds for $d=3$ and $p=5$.

\subsection{Remarks on $p \geq 7$}
Similar to the step from $p=3$ to $p=5$, the amount of work required to check all cases increases drastically. So while the program is not restricted to $p=5$ the amount of computations required for the $p=7$ case is far beyond of what is feasible.

\section{Explanation of different parts of the program}

\subsection{The reduction of the search space for $D_1$}
\label{subsec::search_reduction}
\begin{example}
\label{example::search_reduction}
Let 
$$D= \begin{pmatrix}
    1 & 1 & 0 \\
    1 & 0 & 1 \\
    0 & 1 & 1
  \end{pmatrix},
D_2= \begin{pmatrix}
    1 & 0 & 1 \\
    0 & 1 & 1 \\
    1 & 1 & 0
  \end{pmatrix}
  $$
and $b_1, b_2$ such that they give raise to $D$.
Then choosing
$$D_1=\begin{pmatrix}
    2 & 0 & 0 \\
    0 & 0 & 2 \\
    0 & 2 & 0
  \end{pmatrix} \in \mathcal{D}_2$$ doesn't give any valid $b_3$ and can therefore be disregarded.
\end{example}
\begin{proof}
We can show this by observing the pre-image with respect to $D$ and the image with respect to $D_2$ of every 1 in $b_2$. If we think of the vectors $b_1$, $b_2$, $b_3$ written as rows of a $3 \times 6$ matrix, this means finding all ones in $b_2$ and looking at the number above and below. Because of $D$ the numbers above a 1 in $b_2$ can only be one 0 and one 2. And because of $D_2$ the numbers below a 1 in $b_2$ can only be one 1 and one 2. So only looking at $b_1$ and $b_3$ there is either at least on pair (0,1) denoting a 0 on top (in $b_1$) and a 1 at the bottom (at the same position in $b_3$) and (2,2), or we have (0,2) and (2,1). Since $(b_1,b_3)$ give raise to $D_1$ this has to be reflected by the entries with the respective indices. But $(D_1)_{0,1}=0$ which excludes the first case and $(D_1)_{0,2}=0$ which excludes the second case, a contradiction.
\end{proof}
This example shows that we can disregard this choice of $D_1$. Furthermore, here we only followed the 'path' of every 1. We can do the same steps for every number in $\Z_p$ and will get different restrictions most of the time.

\subsection{Algorithms}

The following algorithms refer to the source code contained in Appendix A. The code can also be accessed at \url{ https://doi.org/10.5281/zenodo.2541903}.

\begin{algorithm}[{\ttfamily davey\_on\_vec}] \label{alg::davey_on_vec}
Given a Davey matrix $D$ and a balanced vector $b$ we can generate a corresponding balanced vector $b'$ such that $D$ is raised by $(b,b')$. In general there are several solutions $b'$. $D_{i,j}$ tells us how often a number $i$ lies on top of a number $j$. So for each number in $\Z_p$ we can collect which numbers have to be below it and how often. Then we just traverse $b$ from left to right and assign the numbers according to these collections.
\end{algorithm}

\begin{algorithm}[{\ttfamily davey\_on\_vec\_2x2}]
\label{alg::davey_on_vec_2x2}
This is similar to the previous algorithm, but now we are given two Davey matrices $D^1$ and $D^2$ and two balanced vectors $b^1$ and $b^2$. Our goal is to generate all possible vectors $b^3$ such that the pairs $(b^1,b^3)$ and $(b^2,b^3)$ give raise to $D^1$ and $D^2$ respectively.

To solve this we do a simple tree search. To find the entry $b^3_i=y$ we have to look at the corresponding value $b^1_i=x$, now using $D^1$ we can find the possible values for $y \in \Z_p$ that still allows $(b^1,b^3)$ to give raise to $D^1$, namely we need $D^1_{x,y}>0$. Let $A_1$ be the set of all such values $y$. We can do the same with $b^2$, $D^2$ to get another subset of $\Z_p$: $A_2$ and $b^3_i$ has to be in both sets generated that way. If $A_1 \cap A_2=\emptyset$ we can abandon this branch. If $|A_1 \cap A_2|=1$ we have found $b^3_i$. And if $|A_1 \cap A_2|>1$ then there might be several solutions, so we have to branch and check all possible choices for $b^3_i$.

That way we can step by step find all possible vectors $b^3$.
\end{algorithm}

\begin{algorithm}[{\ttfamily get\_davey\_matrices}]
This algorithm generates all possible Davey matrices of a fixed weight n.

From the note after Definition 5.3. in \cite{Aten2015} we know that a Davey matrix can be written as a sum of permutation matrices. Therefore we need to generate all possible permutation matrices first.

Now any permutation matrix has sum of rows and sum of columns equal to 1. However the sums along the diagonals might not be constant. For each permutation matrix we calculate these sums, a $p$-tuple we will call {\it key} and store matrices with the same key together.

Next we run a loop over all possible combinations of $n$ (where $n$ is the desired weight of the Davey matrices) of those keys. Taking one combination, if (and only if) adding all these keys (as vectors in $\Z^p$) results in $(n,\ldots,n)$ then any sum of the respective matrices to the involved keys will give raise to a Davey matrix!
\end{algorithm}

\begin{algorithm}[{\ttfamily balanced\_linear\_combinations}]
For a given balanced vector $b \in \Z_p^d$ any multiple that isn't divisible by $p$ is a balanced vector again. If however we have three balanced vectors $b_1,b_2,b_3$ determining which linear combinations of the three vectors are again balanced vectors is tricker, so we have to brute-force it by testing all $p^3$ possible combinations.

\end{algorithm}

\begin{algorithm}[{\ttfamily calc\_cache}]
This implements part of the calculation outlined in Subsection \ref{subsec::search_reduction}. Namely the one we can do preemptively, before starting the actual program to generate a cache for quick lookup. Considering Example \ref{example::search_reduction} we see that after choosing 1 as the pivot element what actually mattered were the numbers above: $a=(0,2)$ (in $b_1$) and below (in a potential $b_3$): $b=(1,2)$ and our conditions for the Davey matrix turned out to be that either the entries at $\{(0,1), (2,2)\}$ or the entries at $\{(0,2),(2,1)\}$ were non-zero. These are all possible sets of pairings of numbers in $a$ and $b$, we will call them the keys.
Finding these sets of pairings is the first step of this algorithm, done iteratively over the length of the tuples $a$ and $b$. The second step is for each of these entries to take all pairings and find those Davey matrices that match at least one key, then the set of those matrices will be stored as a result for $(a,b)$.
\end{algorithm}

\begin{algorithm}[{\ttfamily davey\_filtered\_from\_cache}]
This algorithm is the other part of the calculation outlined in Subsection \ref{subsec::search_reduction}, the one done during the main calculation. This algorithm uses an encoding of $D$ in its "pre-images", i.e. a list of tuples, the first tuple consisting of the numbers that result in a 0, next those that result in a 1, and so forth, accounting for multiplicity. $D$ from Example \ref{example::search_reduction} would be encoded as $[(0,1),(0,2),(2,3)]$ (one 0 and one 1 result in a 0) while $D_1$ would be encoded as $[(0,0),(2,2),(1,1)]$.
The given $D_2$ will also be encoded in a similar way by looking at the image for each number, i.e. which numbers in a potential $b_3$ have to be below a 0, a 1, etc. So for $D_2$ as in Example \ref{example::search_reduction} this would be $[(0,2),(1,2),(2,3)]$.
Note that Davey matrices for $p=3$ are always symmetric but this is no longer true for $p \geq 5$.
Now having these encodings we can use the previously calculated {\ttfamily davey\_cache} to get all Davey matrix candidates for what $D_1$ can be.
So we basically do what we did in Example \ref{example::search_reduction}, following the "path" of each value in $\Z_p$. For each number we get a set of Davey matrices that are compatible with that value, so the intersection of those sets is compatible with all of them. That list will be returned.

In Example \ref{example::search_reduction} we only used 1 as the value, but already saw that $D_1$ wasn't in the cache generated for the combination $((0,2),(1,2))$ (second entry of the encoding of $D$ and second entry of the encoding of $D_2$) and therefore not in any intersection.
\end{algorithm}
\pagebreak
\appendix

\section{Source code}
The code can also be accessed at \url{ https://doi.org/10.5281/zenodo.2541903}.
\lstset{
  basicstyle=\ttfamily\footnotesize
}

\lstinputlisting[language=Python]{code/fuglede_d3.py}

\pagebreak
\bibliography{fuglede_d3p5}
\bibliographystyle{abbrv}

\end{document}